\def\latex/{{\protect\LaTeX}}
\def\latexe/{{\protect\LaTeXe}}
\def\amslatex/{{\protect\AmS-\protect\LaTeX}}
\def\tex/{{\protect\TeX}}
\def\amstex/{{\protect\AmS-\protect\TeX}}
\def\bibtex/{{Bib\protect\TeX}}
\def\makeindx/{\textit{MakeIndex}}
\theoremstyle{plain} 
\newtheorem{thm}{Theorem}[section]
\newtheorem{prop}[thm]{Proposition}
\newtheorem{cor}[thm]{Corollary}
\theoremstyle{definition}
\newtheorem{chunk}[thm]{\hspace*{-1.065ex}\bf}
\newtheorem{lem}[thm]{Lemma}
\newtheorem{dfn}[thm]{Definition}
\newtheorem{eg}[thm]{Example}
\newtheorem{ques}[thm]{Question}
\newtheorem{rmk}[thm]{Remark}
\newcommand{\fm}{\mathfrak{m}}
\newcommand{\CC}{\mathbb{C}}
\newcommand{\fp}{\mathfrak{p}}
\newcommand{\fq}{\mathfrak{q}}
\DeclareMathOperator{\Tor}{\operatorname{\mathsf{Tor}}}
\DeclareMathOperator{\Ext}{\operatorname{\mathsf{Ext}}}
\DeclareMathOperator{\Hom}{\operatorname{\mathsf{Hom}}}
\DeclareMathOperator{\CI}{\operatorname{\mathsf{CI-dim}}}
\DeclareMathOperator{\Tr}{\textnormal{Tr}}
\DeclareMathOperator{\G-dim}{\operatorname{\mathsf{G-dim}}}
 \DeclareMathOperator{\Ass}{Ass}
 \DeclareMathOperator{\Supp}{Supp}
 \DeclareMathOperator{\Spec}{Spec}
 \DeclareMathOperator{\pd}{\operatorname{\mathsf{pd}}}
 \DeclareMathOperator{\height}{\operatorname{\mathsf{height}}}
 \DeclareMathOperator{\depth}{\operatorname{\mathsf{depth}}}
  \DeclareMathOperator{\rk}{\operatorname{\mathsf{rank}}}
  \DeclareMathOperator{\Y}{\operatorname{\mathsf{Y}}}
 \def\Tr{\mathsf{Tr}\hspace{0.01in}}
 \newcommand{\Min}{\textup{Min}}
\def\urltilda{\kern -.15em\lower .7ex\hbox{\~{}}\kern
  .04em}\def\urldot{\kern -.10em.\kern -.10em}\def\urlhttp{http\kern
  -.10em\lower -.1ex\hbox{:}\kern -.12em\lower 0ex\hbox{/}\kern
  -.18em\lower 0ex\hbox{/}}
\newcommand{\bb}{\left[ \begin{smallmatrix}}
\newcommand{\eb}{\end{smallmatrix} \right]}
\begin{document}

\title[On an example concerning the second rigidity theorem]{On an example concerning the second rigidity theorem}


\author[O. Celikbas]{Olgur Celikbas}
\address{Olgur Celikbas\\
Department of Mathematics \\
West Virginia University\\
Morgantown, WV 26506-6310, U.S.A}
\email{olgur.celikbas@math.wvu.edu}

\author[H. Matsui]{Hiroki Matsui} 
\address{Hiroki Matsui\\Graduate School of Mathematical Sciences\\ University of Tokyo, 3-8-1 Komaba, Meguro-ku, Tokyo 153-8914, Japan}
\email{mhiroki@ms.u-tokyo.ac.jp}

\author[A. Sadeghi]{Arash Sadeghi}
\address{Arash Sadeghi\\School of Mathematics, Institute for Research in Fundamental Sciences (IPM), P.O. Box: 19395-5746, Tehran, Iran.}
\email{sadeghiarash61@gmail.com}

\subjclass[2010]{Primary 13D07; Secondary 13H10, 13D05, 13C12}
\keywords{Reflexivity of tensor products of modules, Serre's condition, syzygy, vanishing of Tor} 
\thanks{Sadeghi's research was supported by a grant from the Institute for Research in Fundamental Sciences (IPM). Matsui's research was supported by JSPS Grant-in-Aid for Young Scientists 19J00158}
\maketitle{}

\begin{abstract} In this paper we revisit an example of Celikbas and Takahashi concerning the reflexivity of tensor products of modules. We study Tor-rigidity and the Hochster--Huneke graph with vertices consisting of minimal prime ideals, and determine a condition with which the aforementioned example cannot occur. Our result, in particular, corroborates the Second Rigidity Theorem of Huneke and Wiegand. 
\end{abstract}

\section{Introduction}

Throughout $R$ denotes a commutative Noetherian local ring with unique maximal ideal $\fm$, and all $R$-modules are assumed to be finitely generated. For unexplained notations and terminology, such as the definitions of homological dimensions, we refer the reader to \cite{AV2, BH, Mas}.

In this paper we are concerned with the following result of Huneke and Wiegand, which is known as the \emph{Second Rigidity Theorem}; see \cite[2.1]{HW1}.

\begin{thm} $($Huneke and Wiegand \cite{HW1}$)$ \label{HWS} Let $R$ be a hypersurface ring, and let $M$ and $N$ be $R$-modules such that $M$ has rank, i.e., there is a nonnegative integer $r$ such that $M_{\fp}$ is free of rank $r$ for each associated prime ideal $\fp$ of $R$ (e.g., $\pd_R(M)<\infty$). If $M\otimes_RN$ is reflexive, or in this context equivalently, is a second syzygy module, then $N$ is reflexive. \pushQED{\qed} 
\qedhere
\popQED	
\end{thm}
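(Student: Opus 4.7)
The plan is to exploit two features specific to a hypersurface $R$: first, Lichtenbaum's rigidity of $\Tor$, which says that once $\Tor_i^R(M,N)$ vanishes for some $i\geq 1$, it vanishes for every $j\geq i$; second, the fact that $R$ is Gorenstein, so a finitely generated module of positive rank is reflexive if and only if it satisfies Serre's condition $(S_2)$. The latter reduces the problem to showing that $N$ satisfies $(S_2)$. The degenerate case $r=0$ is immediate: then $M$ is torsion, so $M\otimes_R N$ is both torsion and torsion-free, hence zero, and there is nothing to prove; so one may assume $r\geq 1$.

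The crux is to establish $\Tor_i^R(M,N)=0$ for every $i\geq 1$. Because $M_\fp$ is free of rank $r$ at each associated prime $\fp$ of $R$, the module $\Tor_i^R(M,N)$ is automatically torsion for all $i\geq 1$; the task is to upgrade this to outright vanishing at $i=1$, after which rigidity carries it to all higher indices. For this I would deploy the Auslander--Bridger four-term exact sequence relating $\Ext_R^1(\Tr M,N)$ and $\Ext_R^2(\Tr M,N)$ with $M\otimes_R N$ and $\Hom_R(M^{\ast},N)$, in tandem with the hypothesis that $M\otimes_R N$, being reflexive over a Gorenstein ring, satisfies $(S_2)$ and is thus a second syzygy. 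A depth-chase at primes of heights one and two then promotes the torsion-freeness of $M\otimes_R N$ to the required vanishing of $\Tor_1^R(M,N)$.

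Once all higher $\Tor$'s vanish, Auslander's depth formula yields, at every prime $\fp$ of $R$,
$$\depth M_\fp+\depth N_\fp=\depth R_\fp+\depth(M\otimes_R N)_\fp.$$
Since $M_\fp$ has positive rank, $\depth M_\fp\leq \depth R_\fp$, so $\depth N_\fp\geq \depth(M\otimes_R N)_\fp$. The $(S_2)$-property of $M\otimes_R N$ therefore transfers to $N$, and the Gorenstein reflexivity criterion finishes the argument. The main obstacle is clearly the Tor-vanishing step: rigidity has no toehold without an initial vanishing, and producing $\Tor_1^R(M,N)=0$ from the reflexivity of $M\otimes_R N$ alone requires the full Auslander--Bridger machinery together with careful depth accounting at primes of small codimension.
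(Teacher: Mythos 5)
The paper does not supply its own proof of Theorem~\ref{HWS}; it is cited from \cite{HW1}. The closest internal analogue is Theorem~\ref{l4}, whose proof is the right thing to compare against, and it reveals several genuine problems with your sketch.

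The most serious gap is your rigidity premise. You assert that over a hypersurface, ``once $\Tor_i^R(M,N)$ vanishes for some $i\ge 1$, it vanishes for every $j\ge i$,'' and attribute this to Lichtenbaum. That statement is false for arbitrary finitely generated modules over a hypersurface. Take $R=\CC[\![x,y]\!]/(xy)$, $M=R/(x)$, $N=R/(y)$: a direct computation with the periodic resolution $\cdots\xrightarrow{x}R\xrightarrow{y}R\xrightarrow{x}R\to M\to 0$ gives $\Tor_1^R(M,N)=0$ while $\Tor_2^R(M,N)\cong\CC\neq 0$. Lichtenbaum's theorem gives Tor-rigidity over a hypersurface only when $M$ has finite projective dimension (or finite length), and the paper is scrupulous about this: in \ref{c2}(i) Tor-rigidity of (syzygies of) $M$ is recorded under the hypothesis $\pd_R(M)<\infty$, and in Theorem~\ref{l4} Tor-rigidity is an explicit assumption, not a free consequence of the hypersurface hypothesis. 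Theorem~\ref{HWS}, however, assumes only that $M$ has \emph{rank}, which is strictly weaker than $\pd_R(M)<\infty$; so your route cannot reach the stated generality. The Huneke--Wiegand proof has to work with their two-consecutive-Tor rigidity over hypersurfaces, not one-step rigidity, and this is precisely the part you have compressed into ``a depth-chase at primes of heights one and two.''

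There is also a misdirection in your use of the Auslander--Bridger sequence. You propose the $n=0$ sequence built from $\Tr M$, namely $0\to\Ext^1_R(\Tr M,N)\to M\otimes_RN\to\Hom_R(M^*,N)\to\Ext^2_R(\Tr M,N)$. Killing $\Ext^1_R(\Tr M,N)$ via the torsion argument is fine, but this module is not $\Tor_1^R(M,N)$; via the pushforward construction it is $\Tor_1^R(M_1,N)$ for a pushforward $M_1$ of $M$, and applying rigidity there requires rigidity of $M_1$, which does not come for free. The proof of Theorem~\ref{l4} in the paper takes the opposite tack: it works with $\Ext^1_R(\Tr N,M)$, so that Lemma~\ref{l1} converts the vanishing into $\Tor_1^R(M,N_1)=0$ with $M$ on the module side where Tor-rigidity is assumed, and then the vanishing of all $\Tor_i^R(M,N)$ follows from the short exact sequence $0\to N\to F\to N_1\to 0$. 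If you adopt that orientation, together with the finite projective dimension hypothesis replacing ``has rank,'' your outline (and the closing depth-formula step, which is correct as you wrote it) becomes essentially the paper's argument for the $n=2$ case. But as written, the proposal proves only a special case of Theorem~\ref{HWS} and rests on a version of rigidity that fails.
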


Another conclusion of Theorem \ref{HWS}, which is worth noting, is the vanishing of $\Tor_i^R(M,N)$ for each $i\geq 1$. For quite some time it has been an open problem whether the module $M$ in Theorem \ref{HWS} must also be reflexive; see \cite{CHW}. Recently Celikbas and Takahashi \cite{CT2} has given an example disproving this query: there is a reduced hypersurface ring $R$, and modules $M$ and $N$ over $R$ such that both $M\otimes_RN$ and $N$ are reflexive, $\pd_R(M)<\infty$, but $M$ is not reflexive. Moreover, it can be easily checked that there exists a prime ideal $\fq$ of $R$ of height one such that the module $N$ in the example satisfies $\pd_{R_{\fq}}(N_{\fq})=\infty$; see Example \ref{cteg} for details. The main aim of this paper is to show that such an example cannot occur in case $\pd_{R_{\fp}}(N_{\fp})<\infty$ for each prime ideal $\fp$ of $R$ of height at most one. More precisely, we prove:

\begin{thm}\label{anasonuc} Assume $R$ is a hypersurface ring (quotient of an unramified regular local ring), and $M$ and $N$ are nonzero $R$-modules. Assume further:
\begin{enumerate}[\rm(i)] 
\item $\pd(M)<\infty$.
\item $\pd_{R_{\fp}}(N_{\fp})<\infty$ for each prime ideal $\fp$ of $R$ of height at most one.
\end{enumerate}
If $M\otimes_RN$ is reflexive, then both $M$ and $N$ are reflexive.
\end{thm}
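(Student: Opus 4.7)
By Theorem~\ref{HWS} applied with hypothesis (i) (note that $\pd_R(M)<\infty$ forces $M$ to have a well-defined rank, for instance via the alternating sum of Betti numbers in a finite free resolution, evaluated at each minimal prime of $R$), we already know that $N$ is reflexive and $\Tor_i^R(M,N)=0$ for every $i\ge 1$, so the task reduces to showing $M$ is reflexive. Since $R$ is Cohen--Macaulay, I would invoke the standard criterion: $M$ is reflexive if and only if $M_{\fp}$ is reflexive over $R_{\fp}$ for every prime $\fp$ of height $\le 1$, and $\depth_{R_{\fp}} M_{\fp}\ge 2$ for every prime $\fp$ of height $\ge 2$.

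For $\fp$ with $\height\fp\le 1$, hypothesis (ii) gives $\pd_{R_{\fp}}(N_{\fp})<\infty$; combined with (i) and the Tor-vanishing this yields $\pd_{R_{\fp}}((M\otimes_R N)_{\fp})<\infty$. Now $(M\otimes_R N)_{\fp}$ is reflexive over the Gorenstein local ring $R_{\fp}$ of dimension at most one, and is therefore maximal Cohen--Macaulay (a reflexive module embeds in a free module and so inherits depth $\ge \depth R_{\fp}$), so by Auslander--Buchsbaum it is free. Auslander's depth formula then yields $\depth M_{\fp}+\depth N_{\fp}=2\depth R_{\fp}$, forcing both $M_{\fp}$ and $N_{\fp}$ to be MCM and, again by Auslander--Buchsbaum, free over $R_{\fp}$; in particular $M_{\fp}$ is reflexive. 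For $\fp$ with $\height\fp\ge 2$, Auslander's depth formula applies from hypothesis (i) and the Tor-vanishing alone. Since $(M\otimes_R N)_{\fp}$ is reflexive over the Gorenstein ring $R_{\fp}$ of dimension $\ge 2$, it satisfies Serre's $(S_2)$ condition, so $\depth(M\otimes_R N)_{\fp}\ge 2$; combining this with $\depth N_{\fp}\le\depth R_{\fp}$ gives $\depth M_{\fp}\ge 2$, as desired.

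The main obstacle I anticipate is that Auslander's depth formula degenerates at primes $\fp$ where $N_{\fp}=0$ (and hence $(M\otimes_R N)_{\fp}=0$), carrying no information on $\depth M_{\fp}$; note that hypothesis (ii) does not by itself prevent this. Rescuing the argument---either by ruling out such primes or by extracting the requisite reflexivity/depth of $M_{\fp}$ by another route---is presumably where the Hochster--Huneke graph on the minimal primes of $R$ and the Tor-rigidity machinery advertised in the abstract must be brought to bear, and this should constitute the genuine work of the proof.
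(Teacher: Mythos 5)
The gap you flag at the end is real, and your diagnosis is accurate: the depth formula is vacuous at any prime $\fp$ with $N_\fp=0$, so your local analysis yields no bound on $\depth_{R_\fp}M_\fp$ there, and the proof stalls. The paper closes this gap exactly along the lines you suspect, by showing $\Supp(N)=\Spec(R)$. Concretely: since $N$ is reflexive (hence $(S_1)$) and $R$ is Cohen--Macaulay, $N_\fp$ is maximal Cohen--Macaulay over $R_\fp$ for every $\fp\in\Y^1(R)$; hypothesis (ii) and Auslander--Buchsbaum then force $N_\fp$ to be free for all such $\fp$. Because $R$ satisfies $(S_2)$, the Hochster--Huneke graph $G(R)$ is connected (Proposition~\ref{rk}(i)), and the free ranks of $N$ at minimal primes can be propagated along paths in $G(R)$ through height-one primes; this gives that $N$ has a well-defined rank (Proposition~\ref{rk}(ii)). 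That rank must be positive, for otherwise $N$ would be torsion and $M\otimes_RN$ would be a nonzero torsion-free torsion module. Positive rank forces $\Supp(N)=\Spec(R)$, and with this in hand the depth-formula step goes through at every prime in $\Supp(M)$, which is what you need (cf.\ Corollary~\ref{maincor} and \cite[1.3]{GO}).

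Beyond that missing step, your route is essentially parallel to the paper's but differs in one respect: you invoke Theorem~\ref{HWS} as a black box to get reflexivity of $N$ and the vanishing of $\Tor$, while the paper reproves and generalizes that fact in Theorem~\ref{l4}, using Tor-rigidity of $M$, the pushforward construction of \ref{c3}, and the Auslander--Bridger exact sequence, under the weaker hypotheses that $M$ is Tor-rigid and $\CI(N)<\infty$. Both are legitimate for the present statement; the paper's version is self-contained and yields the conclusion for $(\widetilde{S_n})$ with arbitrary $n$, not just reflexivity.
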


We give a proof of Theorem \ref{anasonuc} in section 4, but in fact our main argument is more general: we consider tensor products $M\otimes_RN$ which are $n$-th syzygy modules for $n\geq 2$ and  modules $N$ of finite complete intersection dimension over rings that are not necessarily hypersurfaces; see Theorem \ref{l4}. A key ingredient of our proof is the fact that, when $R$ satisfies Serre's condition $(S_2)$, the \emph{Hochster-Huneke graph} \cite{HH} is connected; see Theorem \ref{rk}.

\section{Preliminaries}

\begin{chunk} \label{c2} An $R$-module $M$ is said to be \emph{Tor-rigid} provided that the following condition holds: if $N$ is an $R$-module with $\Tor_1^R(M,N)=0$, then $\Tor_2^R(M,N)=0$.  Examples of Tor-rigid modules are abundant in the literature. For example, each syzgy of the $R$-module $M$ is Tor-rigid if:
\begin{enumerate}[\rm(i)]
\item $R$ is a hypersurface that is quotient of an unramified regular local ring, and $M$ has either finite length or finite projective dimension; see \cite[2.4]{HW1} and \cite[Theorem 3]{Li}.
\item $R$ has positive depth and $M=\fm^r$ for some integer $r\geq 1$; see \cite[2.5]{CT3}. \pushQED{\qed} 
\qedhere
\popQED	
\end{enumerate}
\end{chunk}

\begin{chunk} \label{c0}  Let $M$ be an $R$-module with a projective presentation $P_1\overset{f}{\rightarrow}P_0\rightarrow M\rightarrow 0$. The \emph{transpose} $\Tr M$ of $M$
is the cokernel of $f^{\ast}=\Hom_{R}(f,R)$, and hence is given by the exact sequence:
$0\rightarrow M^*\rightarrow P_0^*\rightarrow P_1^*\rightarrow \Tr M\rightarrow 0$.
Note $\Tr M$ is well-defined up to projective summands.

Given an integer $n\geq 0$, it follows from \cite[2.8]{AuBr} that there is an exact sequence of functors:
$$0 \rightarrow \Ext^1_R(\Tr \Omega^n M,-) \rightarrow \Tor_n^R(M,-) \rightarrow \Hom_R(\Ext^n_R(M,R),-)
\rightarrow \Ext^2_R(\Tr \Omega^n M,-). \;\;\;\;\;\;\; \qed $$
\end{chunk}

Recall that an $R$-module $N$ is said to be \emph{torsionless} if the natural map $N \to N^{\ast\ast}$ is injective, i.e., $\Ext^1_R(\Tr N, R)=0$; see \ref{c0}.

\begin{chunk} \label{c3} Let $N$ be a torsionless $R$-module and let $\{f_{1},f_{2},\dots, f_{s}\}$ be a minimal generating set of the module $N^{\ast}=\Hom(N,R)$. Let $\displaystyle{\delta: R^{\oplus s} \twoheadrightarrow N^{\ast}}$ be defined by $\delta(e_{i})=f_{i}$ for $i=1,2,\dots, s$, where $\{e_{1},e_{2},\dots, e_{s}\}$ is the standard basis for $R^{\oplus s}$. Then, composing the natural injective map $N \hookrightarrow N^{\ast\ast}$ with $\delta^{\ast}$, we obtain the short exact sequence:
\begin{equation}\notag{}
0 \rightarrow N \stackrel{u}{\rightarrow} R^{\oplus s} \rightarrow N_{1} \rightarrow 0,
\end{equation}
where $u(x)=(f_{1}(x),f_{2}(x),\dots, f_{s}(x))$ for all $x\in N$; see \ref{c0}. Any module $N_{1}$ obtained in this way is called a \emph{pushforward} (or \emph{left projective approximation}) of $M$; see \cite{CFS, EG}. Note that such a construction is unique, up to a non-canonical isomorphism; see, for example, \cite[page 62]{EG}.
Also it follows $\Ext^1_R(N_1,R)=0$ so that $\Omega\Tr N \cong \Tr N_1$ (up to free summands); see \cite[3.9]{AuBr}. \pushQED{\qed} 
\qedhere
\popQED	
\end{chunk}


\begin{chunk} \label{c1} Let $M$ be an $R$-module and let $n\geq 0$ be an integer. Then $M$ is said to satisfy $(\widetilde{S_n})$ provided $\depth_{R_{\fq}}(M_{\fq})\geq \min\{n, \depth(R_{\fq})\}$ for each $\fq \in \Supp(M)$ (note $\depth(0)=\infty$) If $R$ is Cohen-Macaulay, then $M$ satisfies $(\widetilde{S_n})$ if and only if $M$ satisfies  \emph{Serre's condition} $(S_{n})$; see \cite{EG}. 
\pushQED{\qed} 
\qedhere
\popQED	
\end{chunk}

\begin{chunk}
Given an integer $s\geq 0$, we set $\Y^s(R) = \{\fp \in \Spec R \mid \depth(R_{\fp})\leq s\}$. In particular, $\Y^{0}(R)$ denotes the set of all \emph{associated} prime ideals of $R$.
\end{chunk}

\begin{chunk} \label{c1.1} (\cite[2.4]{DSL} and \cite[3.8]{EG})
Let $M$ be an $R$-module and let $n\geq 1$ be an integer. Assume that $\G-dim_{R_{\fp}}(M_{\fp})<\infty$ for each $\fp \in \Y^{n-1}(R)$. Then the following conditions are equivalent:
\begin{enumerate}[\rm(i)]
\item $M$ satisfies $(\widetilde{S_n})$.
\item $M$ is \emph{$n$-torsion-free}, i.e., $\Ext^i_R(\Tr M, R)=0$ for each $i=1, \ldots, n$.
\item $M$ is an \emph{$n$-th syzygy} module, i.e., $M\cong \Omega^n(N)$ for some $R$-module $N$. \pushQED{\qed} 
\qedhere
\popQED	
\end{enumerate}
\end{chunk}

\begin{chunk} \label{c4.5} Let $M$ and $N$ be $R$-modules with $\CI(M)<\infty$ or $\CI(N)<\infty$. If $\Tor_i^R(M,N)=0$ for each $i\geq 1$, then $\depth(M)+\depth(N)=\depth(R)+\depth(M \otimes_R N)$, i.e., the \emph{depth formula} holds; see \cite[2.5]{ArY}. \pushQED{\qed} 
\qedhere
\popQED	
\end{chunk}

\begin{chunk} \label{c5} Let $M$ and $N$ be $R$-modules such that $\CI(N)=0$. Then $\Ext^i_R(\Tr N, M)=0$ for all $i\geq 1$ if and only if $\Tor_i^R(M,N)=0$ for all $i\geq 1$; see \cite[3.2]{Bounds}. \pushQED{\qed} 
\qedhere
\popQED	
\end{chunk}

\section{Main theorem}

In this section we will prove the following theorem which is our main result:

\begin{thm}\label{l4} Let $M$ and $N$ be nonzero $R$-modules, and let $n\geq 1$ be an integer. Assume:
\begin{enumerate}[\rm(i)] 
\item $M$ is Tor-rigid.
\item $\CI(N)<\infty$.
\item $M\otimes_RN$ satisfies $(\widetilde{S_n})$.
\item $\Tor_i^R(M,N)$ is torsion for all $i\gg 0$.
\end{enumerate}
Then $\Tor_i^R(M,N)=0$ for all $i\geq 1$, and $N$ satisfies $(\widetilde{S_n})$.
\pushQED{\qed} 
\qedhere
\popQED	
\end{thm}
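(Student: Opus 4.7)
The plan is to prove, in order, the two conclusions: (a) $\Tor_i^R(M,N) = 0$ for all $i \geq 1$, and (b) $N$ satisfies $(\widetilde{S_n})$. Since $M$ is Tor-rigid, (a) reduces via the syzygy shift $\Tor_{i+1}(M,N) \cong \Tor_1(M, \Omega^i N)$ to the single identity $\Tor_1^R(M,N) = 0$, which will be the core of the argument.

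I would first argue that every $\Tor_i^R(M,N)$ is torsion. Localizing at $\fp \in \Y^0(R)$ and using that torsion modules vanish at associated primes, hypothesis (iv) gives $\Tor_i^{R_\fp}(M_\fp, N_\fp) = 0$ for $i \gg 0$. Since $\depth(R_\fp) = 0$ and $\CI(N_\fp) < \infty$, the Auslander--Buchsbaum equality for CI-dimension forces $\CI(N_\fp) = 0$, so $N_\fp$ is totally reflexive over $R_\fp$ and admits co-syzygies. Via \ref{c5} the eventual Tor-vanishing at $\fp$ translates into eventual Ext-vanishing $\Ext^i_{R_\fp}(\Tr N_\fp, M_\fp) = 0$ for $i \gg 0$, and a Tate-cohomology (complete resolution) argument then upgrades this to $\Tor_i^{R_\fp}(M_\fp, N_\fp) = 0$ for all $i \geq 1$; hence each $\Tor_i^R(M,N)$ is torsion.

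The crucial step is to upgrade torsion to actual vanishing at $i = 1$. Here the hypothesis $(\widetilde{S_n})$ on $M \otimes_R N$ with $n \geq 1$ is decisive: it forces $M \otimes_R N$ to be torsionless, so the natural map $M \otimes_R N \to (M \otimes_R N)^{\ast\ast}$ is injective. I would apply the exact sequence of functors from \ref{c0} at index one to relate $\Tor_1^R(M,N)$ with $\Ext^1_R(\Tr \Omega M, N)$ and $\Hom_R(\Ext^1_R(M,R), N)$, and use the torsionlessness of $M \otimes_R N$ together with the established torsion of $\Tor_1^R(M,N)$ to squeeze out $\Tor_1^R(M,N) = 0$. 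Tor-rigidity then delivers $\Tor_i^R(M,N) = 0$ for every $i \geq 1$.

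With complete Tor vanishing in hand, \ref{c4.5} provides the local depth equation $\depth(M_\fp) + \depth(N_\fp) = \depth(R_\fp) + \depth((M \otimes_R N)_\fp)$; combining this with the AB formula $\depth(N_\fp) = \depth(R_\fp) - \CI(N_\fp)$ and the $(\widetilde{S_n})$ bound on $M \otimes_R N$ produces $\depth(N_\fp) \geq \min\{n, \depth(R_\fp)\}$ at each $\fp \in \Supp(N)$, handling primes in $\Supp(N) \setminus \Supp(M)$ directly via the CI-dimension bound. The main obstacle throughout is the torsion-to-zero upgrade for $\Tor_1^R(M,N)$: Tor-rigidity is a one-directional (forward) implication, so the high-index torsion information alone is insufficient, and the extra leverage from the torsionlessness of $M \otimes_R N$ and finite $\CI$-dimension of $N$ must be combined through a careful exact-sequence argument to close the gap at $\Tor_1$.
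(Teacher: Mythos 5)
Your proposal shows the right instincts (that all $\Tor_i^R(M,N)$ are torsion, that the torsionlessness of $M\otimes_RN$ is the lever, that the depth formula enters), but the two crucial steps do not work as described. For the torsion-to-zero upgrade, you invoke the exact sequence of \ref{c0} at index one for $M$, which reads $0 \to \Ext^1_R(\Tr\Omega M,N) \to \Tor_1^R(M,N) \to \Hom_R(\Ext^1_R(M,R),N) \to \cdots$; the module $M\otimes_RN$ does not appear there, so its torsionlessness gives you no purchase. The move the paper makes is the index-zero sequence for $N$, evaluated at $M$, which produces the embedding $\Ext^1_R(\Tr N,M) \hookrightarrow M\otimes_RN$; combined with the fact that $\Ext^1_R(\Tr N,M)$ is torsion (Lemma \ref{l2}) and $M\otimes_RN$ is torsion-free, this forces $\Ext^1_R(\Tr N,M)=0$. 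Note this still is not $\Tor_1^R(M,N)=0$: one must first use Lemma \ref{l3} (with Tor-rigidity of $M$) to get $\Ext^1_R(\Tr N,R)=0$, hence a pushforward $N_1$, and only then does Lemma \ref{l1} translate $\Ext^1_R(\Tr N,M)=0$ into $\Tor_1^R(M,N_1)=0$, whence Tor-rigidity kills all $\Tor_i^R(M,N)$.

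Your route to $(\widetilde{S_n})$ for $N$ also has gaps. The depth formula only gives information at primes $\fp\in\Supp M\cap\Supp N$, and even there the equation $\depth M_\fp + \depth N_\fp = \depth R_\fp + \depth(M\otimes_RN)_\fp$ combined with $\depth(M\otimes_RN)_\fp \geq \min\{n,\depth R_\fp\}$ does not yield $\depth N_\fp \geq \min\{n,\depth R_\fp\}$ without an upper bound like $\depth M_\fp \leq \depth R_\fp$, which is not available here ($R$ need not be Cohen--Macaulay and we have no finiteness on a homological dimension of $M$). For primes in $\Supp N\setminus\Supp M$, the "CI-dimension bound" only gives $\depth N_\fp \leq \depth R_\fp$, an upper bound in the wrong direction. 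The paper sidesteps all of this by reducing $(\widetilde{S_n})$ to the vanishing of $\Ext^i_R(\Tr N,R)$ (via Lemma \ref{l3} and \ref{c1.1}), and then proving $\Ext^i_R(\Tr N,M)=0$ for $i=1,\ldots,n$ by induction: building a chain of pushforwards $N_0=N, N_1,\ldots,N_t$, obtaining an injection $\Ext^{t+1}_R(\Tr N,M)\hookrightarrow M\otimes_RN_t$, and at an associated prime $\fq$ of a putative nonzero $\Ext^{t+1}_R(\Tr N,M)$ tracking depths down the localized pushforward sequences to contradict $\depth_{R_\fq}(M_\fq\otimes N_\fq)\geq t+1$. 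That finer inductive machinery is what your proposal is missing.
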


To prove Theorem \ref{l4}, we will establish several lemmas.

\begin{lem} \label{l1} Let $0 \to N  \stackrel{\mu}
     {\longrightarrow} F \to N_1 \to 0$ be a short exact sequence of $R$-modules, where $F$ is free and $\Ext^1_R(N_1,R)=0$. Then it follows that $\Ext^1_R(\Tr N, M) \cong \Tor_1^R(N_1,M)$.
\end{lem}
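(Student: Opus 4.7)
The plan is to derive the isomorphism directly from the four-term exact sequence of functors recorded in \ref{c0}, applied to $N_1$ with $n=1$.

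Specifically, specializing that exact sequence to the module $N_1$ with $n=1$, and then evaluating at the module $M$, yields
$$0\to\Ext^1_R(\Tr\Omega N_1,M)\to\Tor_1^R(N_1,M)\to\Hom_R(\Ext^1_R(N_1,R),M)\to\Ext^2_R(\Tr\Omega N_1,M).$$
From the given short exact sequence, $N$ is a first syzygy of $N_1$ (since $F$ is free), so $\Tr N$ and $\Tr\Omega N_1$ agree up to free summands, and in particular $\Ext^i_R(\Tr N,M)\cong\Ext^i_R(\Tr\Omega N_1,M)$ for every $i\geq 1$. The hypothesis $\Ext^1_R(N_1,R)=0$ kills the third term of the displayed sequence, so the exactness collapses to the desired isomorphism $\Ext^1_R(\Tr N,M)\cong \Tor_1^R(N_1,M)$.

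The argument is essentially a one-line application of \ref{c0}; the only point requiring a brief verification is the syzygy identification above, which follows from the standard well-definedness of syzygies (and hence of the transpose) up to projective summands. Alternatively, one could prove the lemma by a direct diagram chase: tensoring the given short exact sequence with $M$ realizes $\Tor_1^R(N_1,M)$ as $\ker(\mu\otimes\id_M)$, while the $n=0$ case of \ref{c0} realizes $\Ext^1_R(\Tr N,M)$ as $\ker(\sigma_N\colon N\otimes_R M\to\Hom_R(N^*,M))$; these two kernels are then identified through the commutative square whose right-hand vertical $F\otimes_R M\to \Hom_R(F^*,M)$ is an isomorphism (as $F$ is finitely generated free) and whose bottom edge $\Hom_R(N^*,M)\hookrightarrow \Hom_R(F^*,M)$ is injective (as $\mu^*$ is surjective by the hypothesis on $\Ext^1_R(N_1,R)$).
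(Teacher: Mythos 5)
Your proposal is correct, and your main argument takes a genuinely different (and slicker) route than the paper's. The paper applies the Auslander--Bridger sequence of \ref{c0} at $n=0$ to $N$, obtaining $\Ext^1_R(\Tr N,M)=\ker\bigl(\chi\colon M\otimes_R N\to\Hom_R(N^*,M)\bigr)$, and then runs a diagram chase through the commutative square relating $\mu\otimes M$ and $\Hom(\mu^*,M)$ (using that $F\otimes_R M\to\Hom_R(F^*,M)$ is an isomorphism and that $\Hom(\mu^*,M)$ is injective because $\Ext^1_R(N_1,R)=0$ makes $\mu^*$ surjective) to conclude $\ker(\chi)\cong\ker(\mu\otimes M)=\Tor_1^R(N_1,M)$. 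You instead apply \ref{c0} at $n=1$ directly to $N_1$, observe that $\Ext^1_R(N_1,R)=0$ annihilates the $\Hom_R(\Ext^1_R(N_1,R),-)$ term, and then identify $\Tr\Omega N_1$ with $\Tr N$ up to projective (here free, since $R$ is local) summands because the given sequence exhibits $N$ as a syzygy of $N_1$. This packages the paper's diagram chase into the cited general theorem at the right index, at the cost of invoking stable well-definedness of the transpose; the paper's version is more self-contained and explicit about where the injectivity of $\Hom(\mu^*,M)$ enters. Your ``alternatively'' sketch is precisely the paper's argument, rotated.
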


\begin{proof} We consider the following commutative diagram, where the horizontal maps are the natural ones and $\Hom(\mu^{\ast}, \;M)$ is injective:
\vspace{-0.5in}
\begin{center}
$\xymatrix{
       \\
&   &  M\otimes_RN \ar[d]^{\mu\otimes M} \ar[r]^{\chi\;\;\;\;\;\;\;\;} & \Hom_R(N^{\ast},M) \ar[d]^{\Hom(\mu^{\ast}, \;M)}    \\
&  & M\otimes_RF \ar[r]^{\cong\;\;\;\;\;\;\;\;}      &  \Hom_R(F^{\ast},M)    
 &  &   
&  }$
\end{center}
Note that $\Ext^1_R(\Tr N,M)=\ker(\chi)$; see \ref{c0}. Hence it follows from the above diagram that $\Ext^1_R(\Tr N,M)=\ker(\chi) \cong \ker(\mu\otimes M)=\Tor_1^R(N_1,M)$, as required.
\end{proof}

\begin{lem} \label{l2} Let $M$ and $N$ be $R$-modules with $\CI_R(N)<\infty$. If $\Tor_i^R(M,N)$ is torsion for all $i\gg 0$, then $\Tor_i^R(M,N)$ and $\Ext^i_R(\Tr N, M)$ are torsion for each $i\geq 1$; cf.,  \cite[A.2.]{GORS2}.
\end{lem}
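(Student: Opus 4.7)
The plan is to show that both $\Tor_i^R(M, N)$ and $\Ext^i_R(\Tr N, M)$ vanish after localizing at every $\fp\in\Ass R$, which is the same as being torsion. Since the transpose commutes with localization up to free summands and free summands do not contribute to $\Ext^{\geq 1}$, and since $\Ext$ over a Noetherian ring commutes with flat base change on finitely generated arguments, it suffices to establish $\Tor_i^{R_\fp}(M_\fp, N_\fp)=0$ and $\Ext^i_{R_\fp}(\Tr N_\fp, M_\fp)=0$ for each $\fp\in\Ass R$ and every $i\geq 1$.

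Fix $\fp\in\Ass R$, so $\depth(R_\fp)=0$, and assume $N_\fp\neq 0$ (the other case is trivial). Complete intersection dimension localizes, so $\CI_{R_\fp}(N_\fp)<\infty$, and the analogue of the Auslander--Buchsbaum formula for $\CI$-dimension gives
\[
\CI_{R_\fp}(N_\fp)+\depth_{R_\fp}(N_\fp)=\depth(R_\fp)=0,
\]
forcing $\CI_{R_\fp}(N_\fp)=0$. The hypothesis supplies $\Tor_i^{R_\fp}(M_\fp, N_\fp)=0$ for all $i\gg 0$, and the next step is to push this eventual vanishing all the way down to $i\geq 1$.

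This last step is the one I expect to be the main obstacle, and it is exactly what the cited result \cite[A.2.]{GORS2} provides: over a depth-zero local ring, a module of $\CI$-dimension zero is rigid enough that eventual Tor vanishing propagates to vanishing for every $i\geq 1$. The underlying mechanism is a quasi-deformation $R_\fp\to S\leftarrow Q$ witnessing $\CI_{R_\fp}(N_\fp)=0$, under which $N_\fp\otimes_{R_\fp}S$ acquires finite projective dimension over $Q$ equal to $\pd_Q S$; a change-of-rings argument together with the depth-zero hypothesis then forces the low-degree Tors to vanish as well. Once this is in hand, applying \ref{c5} to $R_\fp$, $M_\fp$, $N_\fp$ (noting $\CI_{R_\fp}(N_\fp)=0$) converts the Tor vanishing statement for all $i\geq 1$ into the desired vanishing of $\Ext^i_{R_\fp}(\Tr N_\fp, M_\fp)$ for all $i\geq 1$. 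Letting $\fp$ range over $\Ass R$ finishes the argument.
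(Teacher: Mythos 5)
Your proposal is correct and follows essentially the same route as the paper: localize at $\fp\in\Ass R$, observe $\CI_{R_\fp}(N_\fp)=0$ via the Auslander--Buchsbaum formula for $\CI$-dimension, propagate eventual Tor vanishing down to all $i\geq 1$, and then convert to Ext vanishing via \ref{c5}. The only difference is bibliographic: for the propagation step the paper cites \cite[4.9]{AvBu} (support varieties) rather than \cite[A.2.]{GORS2}, but both serve the same purpose.
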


\begin{proof} Let $\fp \in \Y^{0}(R)$. Then, since $\Tor_i^{R_{\fp}}(M_{\fp},N_{\fp})=0$ for all $i\gg 0$ and $\CI_{R_{\fp}}(N_{\fp})=0$, we conclude that $\Tor_i^{R_{\fp}}(M_{\fp},N_{\fp})=0$ for all $i\geq 1$ and also $\Ext^i_{R_{\fp}}(\Tr_{R_{\fp}}N_{\fp}, M_{\fp})=0$ for all $i\geq 1$; see \ref{c5} and \cite[4.9]{AvBu}.
\end{proof}

\begin{lem} \label{l3} Let $M$ and $N$ be $R$-modules such that $M\neq 0$ and $M$ is Tor-rigid. If $n\geq 1$ is an integer and $\Ext^n_R(N, M)=0$, then $\Ext^n_R(N, R)=0$.
\end{lem}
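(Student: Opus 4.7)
My plan is to show that $\Ext^n_R(N,R) \otimes_R M = 0$; since $M$ is a nonzero finitely generated module over the local ring $R$ and $\Ext^n_R(N,R)$ is finitely generated, Nakayama's lemma then forces $\Ext^n_R(N,R) = 0$. The key step is proving that the natural evaluation map $\sigma \colon (\Omega^n N)^* \otimes_R M \to \Hom_R(\Omega^n N, M)$ is an isomorphism.

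Set $L = \Omega^{n-1} N$, so that $\Ext^n_R(N,M) = \Ext^1_R(L,M) = 0$, and consider the short exact sequence $0 \to \Omega L \to F \to L \to 0$ with $F$ finitely generated free (note $\Omega L = \Omega^n N$). Applying $\Hom_R(-, M)$, the hypothesis says $\Hom_R(F, M) \twoheadrightarrow \Hom_R(\Omega L, M)$. Since $F$ is free, $\Hom_R(F, M) \cong F^* \otimes_R M$ canonically, and this surjection factors through $\sigma$, so $\sigma$ is surjective.

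Next I invoke a classical four-term Auslander--Bridger exact sequence (a companion to the one in \ref{c0}, obtainable by the same bookkeeping from a projective presentation $P_1 \to P_0 \to \Omega L \to 0$: split the dualized complex $0 \to (\Omega L)^* \to P_0^* \to P_1^* \to \Tr \Omega L \to 0$ into two short exact sequences, tensor with $M$, and compare with $\Hom_R(-, M)$ applied to the presentation):
\[
0 \to \Tor_2^R(\Tr \Omega L, M) \to (\Omega L)^* \otimes_R M \xrightarrow{\sigma} \Hom_R(\Omega L, M) \to \Tor_1^R(\Tr \Omega L, M) \to 0.
\]
Surjectivity of $\sigma$ forces $\Tor_1^R(\Tr \Omega L, M) = 0$, and Tor-rigidity of $M$ upgrades this to $\Tor_2^R(\Tr \Omega L, M) = 0$; hence $\sigma$ is an isomorphism.

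Finally, dualize $0 \to \Omega L \to F \to L \to 0$ to obtain $0 \to L^* \to F^* \to (\Omega L)^* \to K \to 0$ with $K := \Ext^1_R(L,R) = \Ext^n_R(N,R)$. Tensoring with $M$ shows that the cokernel of $F^* \otimes_R M \to (\Omega L)^* \otimes_R M$ is $K \otimes_R M$. The composition of this map with $\sigma$ is precisely the surjection $\Hom_R(F, M) \twoheadrightarrow \Hom_R(\Omega L, M)$ from the second paragraph (via $F^* \otimes_R M \cong \Hom_R(F, M)$); since $\sigma$ is an isomorphism, the map $F^* \otimes_R M \to (\Omega L)^* \otimes_R M$ is itself surjective, whence $K \otimes_R M = 0$ and $K = 0$. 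The main subtlety is establishing the Auslander--Bridger sequence in the displayed form---it is well-known but parallel to rather than identical with the one recorded in \ref{c0}, so a short independent derivation is required---and recognizing that Tor-rigidity enters in exactly one essential place, namely the upgrade $\Tor_1 = 0 \Rightarrow \Tor_2 = 0$ that makes $\sigma$ injective as well as surjective.
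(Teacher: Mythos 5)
Your proof is correct and follows essentially the same route as the paper's: the paper simply cites the four-term Auslander--Bridger exact sequence $\Tor_2^R(\Tr\Omega^nN,M)\to\Ext^n_R(N,R)\otimes_RM\to\Ext^n_R(N,M)\to\Tor_1^R(\Tr\Omega^nN,M)\to 0$ from \cite[2.8(b)]{AuBr}, kills the two $\Tor$ terms exactly as you do (vanishing of $\Ext^n_R(N,M)$ then Tor-rigidity), and finishes with Nakayama. Your extra steps amount to re-deriving that cited sequence from its $n=0$ form via the pushforward $0\to\Omega^nN\to F\to\Omega^{n-1}N\to 0$ and tracking the evaluation map $\sigma$, which is a more detailed but not structurally different argument.
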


\begin{proof} It follows from \cite[2.8(b)]{AuBr} that there is an exact sequence:
\begin{equation}\notag{}
\Tor^R_2(\Tr \Omega^n N, M) \rightarrow \Ext^n_R(N,R)\otimes_RM \rightarrow \Ext^n_R(N,M) \rightarrow \Tor^R_1(\Tr \Omega^n N, M) \rightarrow 0.
\end{equation}
As $\Ext^n_R(N, M)=0$ and $M$ is Tor-rigid, we have that $\Tor^R_2(\Tr \Omega^n N, M)=0$. Thus we conclude $\Ext^n_R(N,R)\otimes_RM \cong \Ext^n_R(N,M)=0$. This gives, since $M\neq 0$, that $\Ext^n_R(N, R)=0$.
\end{proof}

We are now ready to give a proof for our main result:

\begin{proof}[Proof of Theorem \ref{l4}] Note, to show $N$ satisfies $(\widetilde{S_n})$, in view of \ref{c1.1} and Lemma \ref{l3}, it suffices to prove $\Ext^i_R(\Tr N, M)=0$ for each $i=1, \ldots, n$. The vanishing of $\Ext^i_R(\Tr N, M)$, as well as that of $\Tor_i^R(M,N)$, is clear if $\depth(R)=0$; see Lemma \ref{l2}. So we assume $\depth(R)\geq 1$.

It follows from \ref{c0} that there is an injection: $\Ext^1_R(\Tr N, M) \hookrightarrow M\otimes_RN$. It is easy to see, since $M\otimes_RN$ satisfies $(\widetilde{S_1})$, that $M\otimes_RN$ is torsion-free.
On the other hand, $\Ext^1_R(\Tr N, M)$ is torsion; see Lemma \ref{l2}. This establishes the theorem for the case where $n=1$, and also yield the vanishing of $\Tor_{i}^R(M,N)$ as we observe next: it follows from Lemma \ref{l3} that $\Ext^1_R(\Tr N, R)=0$, and hence we can consider the pushforward $N_1$ of $N$; see \ref{c1.1} and \ref{c3}. Now Lemma \ref{l1} shows $\Ext^1_R(\Tr N, M)=0=\Tor_1^R(N_1,M)$. As $M$ is Tor-rigid, we have $\Tor_i^R(M,N)=0$ for each $i\geq 1$.


Next we assume $n\geq 2$, and proceed by induction on $n$ to show that $N$ satisfies $(\widetilde{S_n})$. Suppose there is an integer $t$ such that $t<n$ and $\Ext^i_R(\Tr N, M)=0$ for each $i=1, \ldots, t$. 
Our aim is to prove the vanishing of $\Ext^{t+1}_R(\Tr N, M)$.

It follows from Lemma \ref{l3}, that $\Ext^i_R(\Tr N, R)=0$ for each $i=1, \ldots, t$, i.e., $N$ satisfies $(\widetilde{S_t})$. Therefore we can consider the pushforward sequences 
\begin{equation}\tag{\ref{l4}.1}
0 \to N_{i-1} \to F_i \to N_i \to 0, 
\end{equation}
where $N_0=N$, $F_i$ is free and $\Ext^1_R(N_i,R)=0$ for each $i=1, \ldots, t$; see \ref{c3}.

Note that, for each $i=1,\ldots, t$, we have:
\begin{equation}\tag{\ref{l4}.2}
\Tor_1^R(M,N_i) \cong \Ext^1_R(\Tr N_{i-1}, M) \cong \Ext^i_R(\Tr N, M)=0.
\end{equation}
Here, the first isomorphism in (\ref{l4}.2) is due to Lemma \ref{l1}, while the second isomorphism follows since $\Omega^{i-1}\Tr N \cong \Tr N_{i-1}$ for $i=1, \ldots t$; see \ref{c3}. 

Now, in view of (\ref{l4}.2), tensoring the short exact sequences in (\ref{l4}.1) with $M$, we obtain the following short exact sequences for each $i=1,\ldots, t$:
\begin{equation}\tag{\ref{l4}.3}
0 \to M\otimes_RN_{i-1} \to M\otimes_RF_i \to M\otimes_RN_i \to 0.
\end{equation}

Recall our aim is to show that $\Ext^{t+1}_R(\Tr N, M)=0$, and since $\Omega^t\Tr N \cong \Tr N_t$ (up to free summands), we have $\Ext^{t+1}_R(\Tr N, M)\cong \Ext^1_R(\Tr N_t, M)$; see \ref{c3}. So \ref{c0} yields an injection  as:
\begin{equation}\tag{\ref{l4}.4}
\Ext^{t+1}_R(\Tr N, M) \hookrightarrow M\otimes_RN_{t}.
\end{equation}
Next we assume $\Ext^{t+1}_R(\Tr N, M)\neq 0$, pick $\fq\in \Ass(\Ext^{t+1}_R(\Tr N, M))$, and seek a contradiction.

Suppose $\fq \in \Y^{t}(R)$. Then, since $N$ satisfies $(\widetilde{S_t})$, we have $\depth_{R_{\fq}}(N_{\fq})\geq \depth(R_{\fq})$. This shows $\CI_{R_{\fq}}(N_{\fq})=\depth(R_{\fq})-\depth_{R_{\fq}}(N_{\fq})=0$. Therefore, since $\Tor_i^R(M,N)=0$ for each  $i\geq 1$, we deduce from \ref{c5} that $\Ext^i_R(\Tr N, M)_{\fq}=0$ for each  $i\geq 1$. In particular $\fq \notin \Y^{t}(R)$, i.e., $\depth(R_{\fq})\geq t+1$, because of the fact that $\Ext^{t+1}_R(\Tr N, M)_{\fq}\neq 0$.

Notice $\fq \in \Supp(M)\cap\Supp(N)$. Hence it follows from \ref{c4.5} that 
\begin{equation}\tag{\ref{l4}.5}
\depth_{R_{\fq}}(M_{\fq})=\big(\depth(R_{\fq})-\depth_{R_{\fq}}(N_{\fq}) \big)+\depth_{R_{\fq}}(M_{\fq}\otimes_{R_{\fq}} N_{\fq})\geq t+1.
\end{equation}
The inequality in (\ref{l4}.5) are due to the following facts: $t+1\leq n$ so that $M\otimes_RN$ satisfies $(\widetilde{S}_{t+1})$, $\depth(R_{\fq})\geq t+1$, and $\CI_{R_{\fq}}(N_{\fq})=\depth(R_{\fq})-\depth_{R_{\fq}}(N_{\fq})\geq 0$.

Recall that $\Ext^{t+1}_R(\Tr N, M)_{\fq}$ is a nonzero module of depth zero. Hence, we see, by revisiting (\ref{l4}.4), that $\depth_{R_{\fq}}(M_{\fq}\otimes_{R_{\fq}} (N_{t})_{\fq})=0$. However, by localizing (\ref{l4}.3) at $\fq$ and using depth lemma, along with (\ref{l4}.5), we have
$\depth_{R_{\fq}}(M_{\fq}\otimes_{R_{\fq}} N_{\fq})=t$; this is a contradiction since $M\otimes_RN$ satisfies $(\widetilde{S}_{t+1})$ and so $\depth_{R_{\fq}}(M_{\fq}\otimes_{R_{\fq}} N_{\fq})\geq t+1$. Consequently, $\Ext^{t+1}_R(\Tr N, M)$ must vanish, and this completes the proof of the theorem.
\end{proof}

\section{Proof of Theorem \ref{anasonuc} and further remarks}

\begin{dfn} (\cite{HH}) The {\it Hochster-Huneke graph} $G(R)$ is defined as follows:
\begin{itemize}
\item
The set of \emph{vertices} equals $\Min(R)$, i.e., vertices are the minimal prime ideals of $R$.
\item
There is an \emph{edge} between two vertices $\fp$ and $\fq$ of $G(R)$ $\Longleftrightarrow$ $\height (\fp+\fq) \le 1$.
\end{itemize}
\end{dfn}

\begin{rmk}\label{rem} (\cite{HH}) The following hold for the graph $G(R)$:
\begin{enumerate}[\rm(i)]
\item Given two vertices $\fp_1$ and $\fp_2$ of $G(R)$, there is an edge between $\fp_1$ and $\fp_2$ if and only if $\fp_1 + \fp_2$ is contained in some height-one prime ideal $\fq$. 
\item $G(R)$ is \emph{connected} if and only if given two vertices $\fp$ and $\fp'$ of $G(R)$, there are minimal prime ideals  $\{ \fp_0, \fp_1, \ldots, \fp_r \}$ of $R$, and height-one prime ideals 
$\{ \fq_1, \fq_2, \ldots, \fq_r \}$ of $R$, where $\fp=\fp_0$, $\fp'=\fp_r$ and $\fp_i, \fp_{i+1} \subseteq \fq_{i+1}$ for each $i=0, 1, \ldots, r-1$. \pushQED{\qed} 
\qedhere
\popQED	
\end{enumerate}
\end{rmk}

The first part of the next proposition is proved in \cite[3.6]{HH} for complete local rings. Here, for the convenience of the reader, we go over its proof since we do not assume $R$ is complete.

\begin{prop} \label{rk} Assume $R$ satisfies $(S_2)$, e.g., $R$ is Cohen-Macaulay. Then the following hold:
\begin{enumerate}[\rm(i)]
\item $G(R)$ is connected.
\item If $N$ is an $R$-module such that $N_{\fp}$ is free for each $\fp \in Y^{1}(R)$, then $N$ has rank. 
\end{enumerate}
\end{prop}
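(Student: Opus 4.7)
The plan is to prove (i) first and deduce (ii) by walking on the graph. For (ii), once $G(R)$ is connected, the argument is direct: since $R$ satisfies $(S_1)$, the associated primes of $R$ coincide with its minimal primes, so it suffices to show that $\rk_{R_{\fp}}N_{\fp}$ is independent of $\fp\in\Min(R)$. Given two such primes $\fp,\fp'$, Remark~\ref{rem}(ii) supplies a chain of minimal primes $\fp=\fp_0,\dots,\fp_r=\fp'$ and height-one primes $\fq_1,\dots,\fq_r$ with $\fp_{i-1},\fp_i\subseteq\fq_i$. Each $\fq_i$ lies in $\Y^{1}(R)$, so $N_{\fq_i}$ is free; further localization at $\fp_{i-1}R_{\fq_i}$ and $\fp_iR_{\fq_i}$ gives $\rk N_{\fp_{i-1}}=\rk N_{\fq_i}=\rk N_{\fp_i}$, and transitivity along the chain closes the argument.

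For (i), I would argue by contradiction. Suppose $G(R)$ is disconnected and partition $\Min(R)=A\sqcup B$ with no edge between the two sides; set $I=\bigcap_{\fp\in A}\fp$ and $J=\bigcap_{\fq\in B}\fq$. The no-edge condition says, via Remark~\ref{rem}(i), that no height-one prime contains $I+J$, so $\height(I+J)\ge 2$. The cases $\dim R\le 1$ are immediate (a single vertex in dimension $0$; in dimension $1$ every minimal prime lies in $\fm$, which has height one, so $G(R)$ is complete). So assume $\dim R\ge 2$, in which case $(S_2)$ gives $\depth R\ge 2$.

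The crux is to show that $(S_2)$ together with $\height(I+J)\ge 2$ forces $\grade(I+J,R)\ge 2$: by $(S_1)$, $\Ass R=\Min R$, so prime avoidance supplies $x\in I+J$ regular on $R$; then any $\fp\in\Ass(R/xR)$ satisfies $\depth R_{\fp}=1$, which via $(S_2)$ forces $\height\fp\le 1$, and hence $\height\fp=1$ since $x$ is regular. Prime avoidance again supplies $y\in I+J$ regular on $R/xR$, so $\grade(I+J,R)\ge 2$. Consequently $\LC^{0}_{I+J}(R)=\LC^{1}_{I+J}(R)=0$, and the standard local-cohomology sequence identifies $R$ with $\LC^{0}(\Spec R\setminus V(I+J),\cO_{\Spec R})$. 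On the other hand, since $V(I)\cup V(J)=\Spec R$ and $V(I)\cap V(J)=V(I+J)$,
\[
\Spec R\setminus V(I+J)\;=\;\bigl(V(I)\setminus V(I+J)\bigr)\,\sqcup\,\bigl(V(J)\setminus V(I+J)\bigr)
\]
is a disjoint union of two nonempty open-and-closed pieces (nonemptiness of each follows from $A\cap B=\emptyset$: any $\fp\in A$ satisfies $\fp\not\supseteq J$, since otherwise $\fp$ would contain some minimal prime from $B$ and hence equal it). The resulting idempotent of $\LC^{0}(\Spec R\setminus V(I+J),\cO_{\Spec R})$ then produces a nontrivial idempotent of $R$, contradicting the locality of $R$.

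The step I expect to require the most care is the grade bound $\grade(I+J,R)\ge 2$ deduced from $(S_2)$; once that is secured, the remainder is a clean unwinding of Grothendieck's connectedness principle, and goes through without any completeness or reducedness hypothesis on $R$.
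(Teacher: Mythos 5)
Your proof of part (ii) is the same as the paper's: take a path in $G(R)$ between two minimal primes, and propagate the rank along each edge via the height-one prime that witnesses it. For part (i) you take a genuinely different route. The paper shares your setup (partition $\Min(R)$, form $I=\bigcap_{A}\fp$ and $J=\bigcap_{B}\fq$, note $\height(I+J)\ge 2$) and, like you, needs the key input that $(S_2)$ together with $\height(I+J)\ge 2$ yields a length-two $R$-regular sequence in $I+J$ --- which the paper asserts and you actually justify via a depth/height count on $\Ass(R/xR)$. From that point the two arguments diverge: the paper replaces $I,J$ by powers so that $IJ=0$, picks a regular sequence $u+v,\,u'+v'$ with $u,u'\in I$, $v,v'\in J$, and runs a short elementary computation ($v'(u+v)-v(u'+v')\in IJ=0$, etc.) to force $a+b=1$ for elements $a,b$ with $v=a(u+v)$ and $u=b(u+v)$, whence one of $u,v$ is a nonzerodivisor annihilating $J$ or $I$, a contradiction. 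You instead feed $\grade(I+J,R)\ge 2$ into the local cohomology sequence to identify $R$ with $\Gamma(\Spec R\setminus V(I+J),\mathcal{O})$, observe that the punctured spectrum splits as $(V(I)\setminus V(I+J))\sqcup(V(J)\setminus V(I+J))$ with both pieces nonempty, and extract a nontrivial idempotent of $R$ --- impossible since $R$ is local. Both arguments are correct. Yours is the "Grothendieck connectedness" mechanism that underlies Hochster--Huneke's original complete local statement, and you correctly point out it needs neither completeness nor reducedness; the paper's variant trades that conceptual machinery for a self-contained, purely elemental computation, which is precisely why the authors present it (they explicitly note they are re-proving [HH, 3.6] without the completeness hypothesis). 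Your explicit handling of $\dim R\le 1$ is harmless but unnecessary: in those cases $\height(I+J)\ge 2$ is already impossible, so a disconnected partition cannot exist.
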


\begin{proof}
(i)  We assume $G(R)$ is not connected, and seek a contradiction.

Notice, since $G(R)$ is disconnected, there is a nontrivial partition of the set of all minimal prime ideals of $R$ as $\Min(R) = \{\fp_1, \cdots, \fp_r\} \sqcup \{\fq_1, \cdots, \fq_s\}$, where $\height (\fp_i + \fq_j) \ge 2$ for each $i$ and $j$.
Letting $I= \bigcap_{i=1}^r \fp_i$ and $J= \bigcap_{j=1}^s \fq_j$, we get two non-nilpotent ideals $I$ and $J$ such that $IJ$ is nilpotent.
Moreover it follows that $\height (I+J) \ge 2$ since
\begin{equation}\notag{}
V(I+J) = V(I) \cap V(J) =  \Bigg[ \bigcup_{i=1}^r V(\fp_i) \Bigg] \bigcap  \Bigg[ \bigcup_{j=1}^s V(\fq_j) \Bigg] = \bigcup_{i,j}V(\fp_i + \fq_j) \text{ and } \height (\fp_i + \fq_j) \ge 2.
\end{equation}
By replacing the ideals $I$ and $J$ with their appropriate powers, we may assume $IJ = 0$.

Since $R$ satisfies $(S_2)$ and $\height(I+J) \ge 2$, there is an $R$-regular sequence $\{u+v, u'+v'\}$ in $I+J$, where $u,u' \in I$ and $v, v' \in J$.
In view of the fact $v'(u+v) - v(u'+v') = v'u-vu' \in IJ=0$, we conclude that there is an element $a \in R$ such that $v = a(u+v)$.
Similarly, we deduce that $u = b(u+v)$ for some $b \in R$. Therefore we have $u+v = (a+b)(u+v)$, and hence $a+b$ is unit in $R$. This implies that either $a$ or $b$ is unit in $R$.
We assume, without loss of generality, that $a$ is unit. Then $u$ is $R$-regular, and the equality $uJ = 0$ shows that $J=0$, which is a contradiction. Consequently, $G(R)$ is not connected.

(ii) Note, as $R$ satisfies $(S_2)$, each associated prime of $R$ is minimal, and $\fp \in Y^{1}(R)$ if and only if $\height(\fp)\leq 1$. Moreover, by part (i), we know $G(R)$ is connected. 

Let $\fp$ and $\fp'$ be two minimal prime ideals of $R$. Then we know  there are minimal prime ideals  $\{ \fp_0, \fp_1, \ldots, \fp_r \}$ of $R$, and height-one prime ideals 
$\{ \fq_1, \fq_2, \ldots, \fq_r \}$ of $R$, where $\fp=\fp_0$, $\fp'=\fp_r$ and $\fp_i, \fp_{i+1} \subseteq \fq_{i+1}$ for each $i=0, 1, \ldots, r-1$.

By assumption, for each $i=0, 1, \ldots, r-1$, we know that the modules $M_{\fp_i}$, $M_{\fp_{i+1}}$ and $M_{\fq_{i+1}}$ are free. Moreover, as $(M_{\fq_{i+1}})_{\fp_iR_{\fq_{i+1}}}\cong M_{\fp_i}$, for each $i=0,1,\ldots, r-1$, we deduce:
\begin{equation}\notag{}
\rk_{R_{\fp_i}}(M_{\fp_i}) = \rk_{R_{\fq_{i+1}}}(M_{\fq_{i+1}}) = \rk_{R_{\fp_{i+1}}}(M_{\fp_{i+1}}).
\end{equation}
This shows that $\rk_{R_{\fp}}(M_{\fp}) = \rk_{R_{\fp'}}(M_{\fp'})$, as required.
\end{proof}

We can strengthen the conclusion of Theorem \ref{l4}, and show that both modules in question satisfy $(\widetilde{S_n})$ in case local freeness hypothesis on $\Y^1(R)$ is included in our assumptions. 
 
\begin{cor} \label{maincor} Assume $R$ satisfies $(S_2)$, $n$ is a positive integer, and $M$ and $N$ are nonzero $R$-modules. Assume further:
\begin{enumerate}[\rm(i)]
\item $M$ is Tor-rigid.
\item $\CI(N)<\infty$.
\item $M\otimes_RN$ satisfies $(\widetilde{S_n})$.
\item $\pd_{R_{\fp}}(N_{\fp})<\infty$ for each $\fp \in \Y^{1}(R)$. 
\end{enumerate}
Then $\Tor_i^R(M, N) =0$ for all $i \ge 1$, and both $M$ and $N$ satisfy $(\widetilde{S_n})$.
\end{cor}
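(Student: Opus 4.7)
The plan is to apply Theorem \ref{l4} to obtain both the Tor-vanishing and $(\widetilde{S_n})$ on $N$, and then to transfer $(\widetilde{S_n})$ from $M \otimes_R N$ to $M$ via the depth formula. The only real work is verifying the missing hypothesis of Theorem \ref{l4} and arranging that $\Supp(N) = \Spec(R)$.

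First I would check that $\Tor_i^R(M,N)$ is torsion for every $i \ge 1$; this establishes hypothesis (iv) of Theorem \ref{l4}. Since $R$ satisfies $(S_2)$, every associated prime of $R$ is minimal, so $\Y^0(R) = \Ass(R) \subseteq \Y^1(R)$. Hence for any $\fp \in \Ass(R)$, hypothesis (iv) of the corollary yields $\pd_{R_\fp}(N_\fp) < \infty$; as $R_\fp$ is artinian, the Auslander--Buchsbaum formula forces $N_\fp$ to be free, so $\Tor_i^{R_\fp}(M_\fp, N_\fp) = 0$ for all $i \ge 1$. Theorem \ref{l4} then yields $\Tor_i^R(M,N) = 0$ for all $i \ge 1$ and the condition $(\widetilde{S_n})$ on $N$.

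The step I expect to be the main obstacle is showing $\Supp(N) = \Spec(R)$, which is needed so that the depth formula gives meaningful information at every $\fq \in \Supp(M)$. Now that $N$ satisfies $(\widetilde{S_1})$, combining with hypothesis (iv) and Auslander--Buchsbaum one more time promotes $\pd_{R_\fp}(N_\fp) < \infty$ to $N_\fp$ being free at every $\fp \in \Y^1(R)$: for $\depth(R_\fp) = 0$ this is immediate, and for $\depth(R_\fp) = 1$ the Serre condition forces $\depth_{R_\fp}(N_\fp) = 1$ and hence $\pd_{R_\fp}(N_\fp) = 0$. Proposition \ref{rk}(ii) now ensures $N$ has a well-defined rank. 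Since $N \neq 0$ and $(\widetilde{S_1})$ together with $\Ass(R) = \Min(R)$ force $\Ass(N) \subseteq \Min(R)$, there exists $\fp \in \Min(R)$ with $N_\fp \neq 0$; so the common rank is positive and $\Supp(N) = \Spec(R)$.

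Finally, I would deduce $(\widetilde{S_n})$ for $M$. For any $\fq \in \Supp(M)$, the previous step places $\fq \in \Supp(M) \cap \Supp(N) = \Supp(M \otimes_R N)$. Localizing at $\fq$, we have $\Tor_i^{R_\fq}(M_\fq, N_\fq) = 0$ for all $i \ge 1$ and $\CI_{R_\fq}(N_\fq) < \infty$, so \ref{c4.5} gives
\[
\depth_{R_\fq}(M_\fq) = \depth(R_\fq) + \depth_{R_\fq}((M \otimes_R N)_\fq) - \depth_{R_\fq}(N_\fq).
\]
Since $\CI_{R_\fq}(N_\fq) < \infty$ forces $\depth_{R_\fq}(N_\fq) \le \depth(R_\fq)$, combining with the assumption that $M \otimes_R N$ satisfies $(\widetilde{S_n})$ yields $\depth_{R_\fq}(M_\fq) \ge \min\{n, \depth(R_\fq)\}$, as required.
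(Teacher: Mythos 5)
Your proposal is correct and follows essentially the same route as the paper: apply Theorem \ref{l4} to get the Tor-vanishing and $(\widetilde{S_n})$ for $N$, show $N_\fp$ is free on $\Y^1(R)$ so that Proposition \ref{rk}(ii) gives $N$ a rank, conclude $\Supp(N)=\Spec(R)$, and finish with the depth formula \ref{c4.5}. You are slightly more explicit than the paper in two places — verifying hypothesis (iv) of Theorem \ref{l4} at associated primes via Auslander--Buchsbaum, and deducing $\Supp(N)=\Spec(R)$ from $N$ itself being $(\widetilde{S_1})$ rather than from the torsion-freeness of $M\otimes_R N$ — but these are minor expository variants of the same argument.
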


\begin{proof} It follows from Theorem \ref{l4} that $\Tor_i^R(M, N) =0$ for all $i \ge 1$, and $N$ satisfies $(\widetilde{S_n})$. Note that, both $M\otimes_RN$ and $N$ satisfy $(S_1)$. Hence $N_{\fp}$ is free for each $\fp \in \Y^{1}(R)$. In particular, $N$ has rank due to Proposition \ref{rk}(ii). Therefore, since $M\otimes_RN$ is torsion-free, we conclude that $\Supp(N)=\Spec(R)$. Now the depth formula shows $M$ satisfies $(\widetilde{S_n})$; see \ref{c4.5} and \cite[1.3]{GO}.
\end{proof}


Now we can prove Theorem \ref{anasonuc}, the result advertised in the introduction:

\begin{proof}[Proof of Theorem \ref{anasonuc}] The result is an immediate consequence of Corollary \ref{maincor} since $M$ is Tor-rigid by a result of Lichtenbaum; see \ref{c2}(i).
\end{proof}

Next we recall an example given in \cite{CT2} concerning the Second Rigidity Theorem; see Theorem \ref{HWS}. The presentation we provide for $M\otimes_RN$ in Example \ref{cteg} has not been given in \cite{CT2} and appears to be new; here we compute it by using \cite{GS, MF}.

\begin{eg}\label{cteg}(\cite{CT2}) Let $R=\CC[\!|x,y,z,w]\!]/(xy)$, $M=\Tr(R/\fp)$, where $\fp=(y,z,w)\in \Spec(R)$, and let $N=R/(x)$. Then $M$ is not reflexive, but since $\pd(M)<\infty$, we have that $N$ is reflexive by Theorem \ref{HWS}. Moreover, $M\otimes_RN$ is reflexive since it is the second syzygy of the cokernel of the rightmost matrix in the following exact sequence:
\begin{center}
	\begin{tikzpicture}
	\node (a) at (0,0) {$R^{\oplus 4}$};
	\node (b) at (3.5,0) {$R^{\oplus 3}$};
	\node (c) at (7,0) {$R^{\oplus 3}$};
	\node (d) at (10.5,0) {$R^{\oplus 4}$};
	
	\node (b') at (5.25,-1.5) {$M\otimes_R N$};
	\node (b'') at (3.5,-3) {$0$};
	\node (c'') at (7,-3) {$0$};
	
	%
	\draw [->] (b'') to (b');
	\draw [->] (b') to (c);
	\draw [->] (b') to (c'');
	\draw [->] (b) to (b');
	\draw [->] (a) edge node[above]{{\scriptsize$\begin{pmatrix}
		x & 0 & 0 & w\\
		0 & x & 0 & y\\
		0 & 0 & x & z
		\end{pmatrix}$}} (b);
	\draw [->] (b) edge node[above]{{\scriptsize$\begin{pmatrix}
			0 & yz & -y^2\\
			-yz & 0 & yz\\
			y^2 & -yw & 0
			\end{pmatrix}$}} (c);
	\draw [->] (c) edge node[above]{{\scriptsize$\begin{pmatrix}
			x & 0 & 0 \\
			0 & x & 0\\
			0 & 0 & x\\
			w & y & z
			\end{pmatrix}$}} (d);
	\end{tikzpicture}
\end{center}
\pushQED{\qed} 
\qedhere
\popQED
\end{eg}	
Next we point out that the conclusion of Theorem \ref{anasonuc} is sharp: 

\begin{rmk} In Example \ref{cteg}, it follows, as $\pd(M)<\infty$, that $\Tor_i^R(M,N)_{\fp}=0$ for all $i\gg 0$ and for all $\fp \in \Spec(R)$, but $M$ is not reflexive. In other words, the torsion hypothesis (iv) of Theorem \ref{l4} is not enough to obtain the conclusion of Theorem \ref{anasonuc}, in general. 

We can easily see that there is a height-one prime ideal $\fq$ of $R$ in Example \ref{cteg} such that $\pd_{R_{\fq}}(N_{\fq})=\infty$. For that note the minimal free resolution of $N$ is given as:
\begin{equation}\notag{}
\xymatrix{\ldots \ar[r]^y& R \ar[r]^x &  R \ar[r]^y &  R \ar[r]^x & R  \ar[r] & N \ar[r]
&0.}
\end{equation}
Localizing this resolution at the height-one prime ideal $\fq=(x,y)$ of $R$, we obtain the minimal free resolution of $N_{\fq}$ over $R_{\fq}$:
\begin{equation}\notag{}
\xymatrix{\ldots \ar[r]^y& R_{\fq} \ar[r]^x &  R_{\fq} \ar[r]^y &  R_{\fq} \ar[r]^x & R_{\fq}  \ar[r] & N_{\fq} \ar[r]
&0.}
\end{equation}
This clearly shows that $\pd_{R_{\fq}}(N_{\fq})=\infty$. \pushQED{\qed} 
\qedhere
\popQED	
\end{rmk}

An $R$-module $M$ is said to be 2-\emph{Tor-rigid} provided, whenever $\Tor_1^R(M,N)=0=\Tor_2^R(M,N)$ for some $R$-module $N$, we have $\Tor_3^R(M,N)=0$. 
We finish this section by noting that the conclusion of Theorem \ref{l4} may fail if the module $M$ is 2-Tor-rigid instead of Tor-rigid:

\begin{eg} \label{egtr} Let $R=\CC[\!|x,y]\!]/(xy)$, $M=R/(x)$ and $N=R/(x^2)$. Note that each $R$-module is 2-Tor-rigid \cite[1.9]{Mu}. Note also that $M\otimes_RN\cong M$ and hence $M\otimes_RN$ satisfies $(\widetilde{S_v})$ for each $v\geq 0$. Also, since $R$ is reduced, $\Tor_i^R(M,N)$ is torsion for each $i\geq 1$. However it is easy to see that $N$ does not satisfy $(\widetilde{S_1})$, $\Tor_1^R(M,N)\neq 0$, and $M$ is not Tor-rigid; see \cite[page 164]{HW2}.  \pushQED{\qed} 
\qedhere
\popQED	
\end{eg}

It is worth noting that we do not know an example similar to Example \ref{egtr} when $n\geq 2$. More precisely, we ask (cf. Example \ref{cteg}):

\begin{ques} \label{soru} Let $R$ be a hypersurface ring, and let $M$ and $N$ be nonzero $R$-modules. Assume $\Tor_i^R(M,N)$ is torsion for all $i\gg 0$.
If $M\otimes_RN$ is reflexive, then must $M$ \emph{or} $N$ be reflexive?\pushQED{\qed} 
\qedhere
\popQED	
\end{ques}

Notice, if the ring $R$ in Question \ref{soru} is a domain (e.g., an isolated singularity of dimension at least two), then it follows from \ref{c4.5} that both $M$ and $N$ are reflexive; see \cite[1.3]{GO}.

\section*{Acknowledgments}
Part of this work was completed while the authors were visiting the \emph{Nesin Mathematics Village} in May 2019. The authors thank the Village for providing a lively working enviroment.

The authors are grateful to W. Frank Moore for \cite{MF}, and to Yongwei Yao for showing us the proof of Proposition \ref{rk}(ii).

\end{document}